\documentclass[11pt]{article}

\usepackage{fullpage}
\usepackage{amsmath}
\usepackage{amsmath}
\usepackage{amssymb}
\usepackage{amsthm}
\usepackage{amscd}
\usepackage{amsfonts}
\usepackage{dsfont}
\usepackage{fancyhdr}
\usepackage{enumerate}
\usepackage[utf8]{inputenc}
\usepackage[margin=1in]{geometry}
\usepackage{color}
\usepackage{hyperref}
\usepackage{tikz}
\usepackage{float}
\usepackage{multicol}
\usepackage{graphicx}

\newtheorem{theorem}{\bf Theorem}[section]
\newtheorem{lemma}[theorem]{\bf Lemma}

\newcommand{\eps}{\varepsilon}

\newcommand{\surp}{{\rm sp}}

\newcommand{\mc}{{\rm mc}}
\newcommand{\chiv}{{\chi_{\rm vec}}}
\newcommand{\vx}{\mathbf{x}}
\newcommand{\vy}{\mathbf{y}}

\usepackage{tikz}

\usepackage{xcolor}

\begin{document}

\title{Tightness of a MaxCut Lower Bound via Vector Chromatic Number}

\author{
Emanuel Juliano\thanks{
Department of Computer Science, Federal University of Minas Gerais, Brazil.
\texttt{emanuelsilva@dcc.ufmg.br}
}
}

\date{}

\maketitle

\begin{abstract}
Recently, Balla, Janzer, and Sudakov showed a lower bound on the MaxCut in terms of the vector chromatic number, recovering known results on the MaxCut of $H$-free graphs. In this note, we show that their bound is tight, providing a construction that achieves a value arbitrarily close to the optimal constant. This answers a question raised by Elphick. Our construction is a modification of the geometric graph used by Feige and Schechtman to establish the integrality gap for the Goemans--Williamson semidefinite relaxation of the MaxCut.
\end{abstract}

\section{Introduction}

The \emph{MaxCut} of a graph $G = (V, E)$, denoted by $\mc(G)$, is the size of the largest bipartite subgraph of $G$. This parameter has been extensively studied over the past 50 years. From a computational perspective, computing the MaxCut of a graph appeared as one of Karp's original 21 NP-complete problems~\cite{karp2009reducibility}. Moreover, it is the first optimization problem for which semidefinite programming was used to obtain a constant-factor approximation guarantee, established in the landmark work of Goemans and Williamson~\cite{goemans1995improved}.

A standard probabilistic argument shows that any graph $G$ with $m$ edges satisfies $\mc(G) \geq m/2$. This motivates the study of the \emph{surplus} of a graph, defined as $\surp(G) = \mc(G) - m/2$. A classical result of Edwards establishes that $\surp(G) \geq \frac{\sqrt{8m + 1}-1}{8}$ for any graph on $m$ edges, a bound that is tight for complete graphs of odd order. To improve this lower bound for graphs that are ``far from complete'', Erd\H{o}s and Lov\'asz~\cite{Erd79} initiated the study of the surplus in $H$-free graphs in the 1970s. A central conjecture in this area, posed by Alon, Bollob\'as, Krivelevich, and Sudakov~\cite{ABKS}, asserts that for any fixed graph $H$, every $H$-free graph $G$ with $m$ edges has a surplus of at least $m^{3/4+\eps_H}$ for some $\eps_H>0$. Recently, a major breakthrough by Jin, Milojevi\'c, Tomon, and Zhang~\cite{jin2025smalleigenvalueslargecuts} made significant progress toward this conjecture by combining novel linear-algebraic techniques with methods from spectral graph theory and semidefinite programming.

The \emph{vector chromatic number} of a graph $G$, denoted $\chiv(G)$, is the minimum real number $\kappa \geq 2$ for which there exists an assignment of unit vectors $\vx_i$ to each vertex $i \in V$ such that $\langle \vx_i, \vx_j\rangle \leq -(\kappa - 1)^{-1}$ for all edges $ij \in E$. This formulation was introduced by Karger, Motwani, and Sudan~\cite{karger1998approximate} to design a semidefinite programming-based approximation algorithm for the graph coloring problem. The vector chromatic number is also famously known as the \emph{Schrijver theta number} of the complement graph, denoted $\vartheta^-(\overline{G})$, a semidefinite program formulation that first appeared in a paper of McEliece, Rodemich, Rumsey, and Welch~\cite{mceliece1978lovasz} and was independently introduced by Schrijver~\cite{schrijver1979comparison}.

Connecting these concepts, a recent work by Balla, Janzer, and Sudakov~\cite{balla2024maxcut} bounded the surplus of a graph in terms of its vector chromatic number, giving a unified approach
and short proofs that recover known results on the MaxCut of $H$-free graphs. 

\begin{theorem}[{\cite[Theorem 1.7]{balla2024maxcut}}]\label{thm:BJS} Let $G$ be a graph which has $m$ edges. Then
\[
    \text{sp}(G) \geq \frac{m}{\pi (\chi_{\text{vec}}(G)-1)}.
\]
\end{theorem}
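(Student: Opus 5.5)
Our plan is to use random hyperplane rounding of an optimal vector coloring, as in Goemans--Williamson. Let $\kappa=\chiv(G)$ and fix unit vectors $\vx_i$ with $\langle \vx_i,\vx_j\rangle\le -(\kappa-1)^{-1}$ for every edge $ij$. Draw a standard Gaussian vector $\mathbf g$ and put $i$ on side $\operatorname{sign}\langle \mathbf g,\vx_i\rangle$. An edge $ij$ whose vectors make angle $\theta_{ij}$ is cut with probability $\theta_{ij}/\pi$. So the expected cut has size $\sum_{ij\in E}\arccos(\langle\vx_i,\vx_j\rangle)/\pi$, and some cut achieves at least this much.

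Next we bound the surplus contribution of each edge. Write $\arccos(t)=\pi/2-\arcsin(t)$. Then the expected cut equals
\[
\frac m2-\frac1\pi\sum_{ij\in E}\arcsin\langle \vx_i,\vx_j\rangle .
\]
Since $\arcsin$ is odd, increasing, and satisfies $|\arcsin(t)|\ge |t|$, every edge has $\langle\vx_i,\vx_j\rangle\le -1/(\kappa-1)<0$, and hence
\[
-\arcsin\langle\vx_i,\vx_j\rangle\ \ge\ \arcsin\frac1{\kappa-1}\ \ge\ \frac1{\kappa-1}.
\]
Summing over all edges gives expected cut at least $m/2+m/(\pi(\kappa-1))$. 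This proves $\surp(G)\ge m/(\pi(\chiv(G)-1))$.

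The only points to check are the following.
\begin{itemize}
\item The infimum in the definition of $\chiv$ is attained, since the feasible set is compact. Alternatively, one can take a sequence of colorings with $\kappa\to\chiv(G)$ and pass to the limit.
\item The inequality $\arcsin t\ge t$ on $[0,1]$ holds, which is immediate from convexity of $\arcsin$ there.
\item If $G$ has no edges, the bound is trivial.
\end{itemize}
None of these is a real obstacle. The key observation is simply that the surplus is governed by $\arcsin$ of the inner products, and each edge contributes at least linearly in $1/(\kappa-1)$.
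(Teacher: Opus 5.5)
Your proposal is correct and takes the same route the paper describes for this cited result: round an optimal vector coloring with a uniformly random hyperplane and compute the expected number of crossing edges, $\sum_{ij\in E}\arccos\langle\vx_i,\vx_j\rangle/\pi$. The paper leaves that computation implicit, and you fill it in correctly by writing $\arccos t=\pi/2-\arcsin t$ and using $\arcsin s\ge s$ on $[0,1]$, which applies because $\kappa\ge 2$ gives $1/(\kappa-1)\le 1$.
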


Theorem~\ref{thm:BJS} is proved by assigning an optimal vector coloring to the vertices of $G$ and applying the classic random hyperplane rounding technique of Goemans and Williamson~\cite{goemans1995improved}. Specifically, given a unit vector embedding where $\langle \vx_i, \vx_j \rangle \leq -(\chiv(G) - 1)^{-1}$ for all edges $ij \in E$, a cut is formed by choosing a uniform random hyperplane through the origin. The lower bound then follows by computing the expected number of edges crossing this hyperplane.

So far, no graph is known to satisfy the inequality in Theorem~\ref{thm:BJS} with the correct constant factor. For this reason, Elphick (personal communication with I. Balla) questioned whether the constant $\pi$ in the denominator could be improved, since computational evidence suggested that it might be replaced by $3$. In this note, we answer this question in the negative by constructing a family of graphs showing that the constant $\pi$ is optimal. Consequently, our result implies that for certain graphs, a random hyperplane cut is essentially the best possible cut achievable from this vector embedding.

\begin{theorem}\label{thm:main}
    For every $\delta \in (0, 1)$, there is a graph $G$ with $m$ edges for which
    \[
    \text{sp}(G) \leq \frac{m}{(\pi-\delta) (\chiv(G)-1)}.
    \]
\end{theorem}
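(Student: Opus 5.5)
We construct a modified Feige--Schechtman graph. Fix a small angle parameter and consider the sphere $S^{d-1}$ in high dimension $d$. Vertices are the points of a fine discretization of the sphere, for example a partition into $n$ cells of equal measure and small diameter. Two points $\vx,\vy$ are adjacent when $\langle \vx,\vy\rangle \le -\rho$, where $\rho = (\kappa-1)^{-1}$ and $\kappa$ is large. Equivalently, the angle between them is at least $\theta_\rho=\arccos(-\rho)$, which is slightly more than $\pi/2$.

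The key change from Feige--Schechtman, who use a thin band around a single angle, is that here all pairs beyond the threshold are edges. The identity embedding is then a vector coloring with parameter $\kappa$, so $\chiv(G)\le \kappa$ and $\chiv(G)-1 \le 1/\rho$. This direction requires no work. Because $\theta_\rho$ is near $\pi/2$ and $d$ is large, measure concentration places almost all edges at angle in $[\theta_\rho,\theta_\rho+\eta]$ for small $\eta$. In effect the graph behaves like the thin-band graph automatically. This is why $\rho$ should be taken small, with $d\gg 1/\rho^2$.

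Next we compute the quantities. Let $p$ be the probability that two random points on the sphere have inner product at most $-\rho$. Then $m \approx p\,n^2/2$, and by concentration the inner product of a typical edge is about $-\rho$. The random hyperplane cut has expected value $\sum_{ij\in E}\arccos\langle\vx_i,\vx_j\rangle/\pi \approx m(1/2+\rho/\pi)$. This gives surplus about $m\rho/\pi$, matching Theorem~\ref{thm:BJS} with $\chiv-1=1/\rho$.

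The main step is the upper bound. We must show that no cut beats a hyperplane cut by more than a factor $1+o(1)$ in surplus, that is, $\mc(G)\le m(1/2+(1+o(1))\rho/\pi)$. Following Feige--Schechtman, we pass to the continuous problem: over all measurable $A\subseteq S^{d-1}$, maximize $\mu\otimes\mu\{(\vx,\vy): \vx\in A,\vy\notin A \text{ or vice versa},\ \langle\vx,\vy\rangle\le-\rho\}$. The fine discretization shows the graph MaxCut is at most this value plus a small error, provided cell diameters are much smaller than the width of the edge band. Here we use the isoperimetric inequality on the sphere (Feige--Schechtman's symmetrization and two-point argument), which states that hemispheres maximize the cut for the thresholded-angle relation. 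For each fixed angle $\theta$, the measure of pairs at angle exactly $\theta$ split by $A$ is maximized by a hemisphere. Integrating over $\theta\ge\theta_\rho$ then shows hemispheres are optimal for the whole threshold graph, since a single hemisphere is simultaneously optimal for every $\theta$. Consequently the continuous MaxCut fraction equals the average of $\theta/\pi$ over edges, which is $1/2+(1+o(1))\rho/\pi$ as $d\to\infty$ by concentration.

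The delicate part is controlling the errors. The surplus is only of order $\rho m$, so the discretization error and the concentration error must be $o(\rho m)$. To choose parameters, we first take $\rho$ small enough depending on $\delta$ that $\arccos(-\rho)-\pi/2=\rho(1+O(\rho^2))$. We then take $d$ large, and finally the mesh fine. This yields $\surp(G)\le m\rho/(\pi-\delta)\le m/((\pi-\delta)(\chiv(G)-1))$, using $\chiv(G)-1\le 1/\rho$.
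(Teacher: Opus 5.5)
Your route is the paper's: a threshold graph on a fine equal-measure partition of a high-dimensional sphere, the identity embedding giving $\chiv(G)-1\le 1/\rho$ with $\rho=-\cos\theta$, hemisphere optimality for the continuous threshold cut, concentration of edge angles just above the threshold, a discretization argument, and finally $\rho$ small in terms of $\delta$ with all errors of order $\delta\rho$. The paper takes $\rho=\delta/\pi$ and error $\eps=\delta\rho/(2\pi^2)$.

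However, one step is justified by a false statement. You derive hemisphere optimality for the threshold relation by claiming that, for each fixed angle, the measure of pairs at exactly that angle split by $A$ is maximized by a hemisphere, and then integrating over angles. This is false in general.
On $S^1$ with $\theta=4\pi/7$, rotation by $\theta$ has order $7$, so the exact-angle graph is a union of $7$-cycles. The union of the arcs $[2\pi j/7,2\pi(j+1)/7)$ for $j\in\{0,1,4,5\}$ cuts $6/7$ of the pairs, whereas a semicircle cuts only $\theta/\pi=4/7$.
On $S^2$, for $\theta$ slightly above $\pi/2$, the set $\{\vx: |x_3|>1/2\}$ cuts about $0.65$ of the pairs at angle exactly $\theta$, against about $1/2$ for a hemisphere.

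The Feige--Schechtman inequality is proved for the threshold relation directly. Two-point symmetrization works there because the kernel $\mathbf{1}[d(\vx,\vy)\ge\theta]$ is monotone in the distance, and it does not work for an exact-angle kernel. So drop the integration argument and apply the threshold statement itself (Feige--Schechtman, Corollary 6, i.e.\ Lemma~\ref{lem:measure}). With that, your outline goes through as in the paper.

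A smaller inaccuracy: the concentration of edge angles in $[\theta_\rho,\theta_\rho+\eta]$ comes from $d$ being large, not from $\theta_\rho$ being close to $\pi/2$. The angle between two random points has density proportional to $\sin^{d-2}$, so, conditioned on being at least $\theta_\rho$, the excess angle is of order $1/(d\rho)$. Small $\rho$ makes this worse, which is why you need $d\gg 1/\rho^2$ (really $d\gg 1/(\delta\rho^2)$). Small $\rho$ is forced only by the requirement $\arcsin(\rho)/\pi<\rho/(\pi-\delta)$.
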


To prove Theorem~\ref{thm:main}, we construct a geometric graph whose vertex set consists of points on the $(d-1)$-dimensional unit sphere $\mathcal{S}^{d-1} \subset \mathds{R}^d$ and two vertices are adjacent if the angle between them is at least $\theta$, for an appropriate choice of $\theta$. This angular threshold condition provides an immediate upper bound on the vector chromatic number. Moreover, in this graph the size of any hyperplane cut is very close to that of the maximum cut. 

Our construction is directly inspired by a seminal work of Feige and Schechtman~\cite{feige2002optimality}, who established the optimal integrality gap $\alpha_{\text{GW}} = \min_{\theta \in [\pi/2, \pi)}\frac{2 \theta}{\pi(1-\cos \theta)}$ for the Goemans--Williamson semidefinite relaxation of MaxCut. In their construction, vertices are adjacent if the angle between them lies within a small interval around the angle $\theta^*$ which is the solution to the minimization problem of $\alpha_{\text{GW}}$. This restriction was necessary to bound the objective value of the semidefinite program relaxation. Since we are only interested in bounding the surplus relative to the vector chromatic number, we simplify both the graph structure and its subsequent analysis.

\section{Construction}

Let $\mathcal{S}^{d-1}$ denote the unit sphere in $\mathds{R}^d$. We measure distances between points on $\mathcal{S}^{d-1}$ by the angle between their respective unit vectors; that is, $d(\vx, \vy) = \arccos(\vx^\top \vy)$. The parameters of our construction are chosen as follows. Fix $\theta \in (\pi/2, \pi)$ and let $\eps \in (0, 1)$. The dimension $d$ is chosen as the smallest integer such that the $(d-1)$-dimensional volume of a spherical cap of radius $\pi-\theta-\eps$ is an $\eps$-fraction of the volume of a cap of radius $\pi-\theta$, which is achieved by taking $d \approx \log (1/\eps)/(\eps(\theta-\pi/2))$.
    
\subsection{Continuous version}

We define a continuous graph $G_c = G_c(\theta, \eps)$ as follows. The vertices of $G_c$ are all points of $\mathcal{S}^{d-1}$. The edges of $G_c$ are pairs of vertices at distance at least $\theta$. Denote by $\mu$ the normalized $(d-1)$-dimensional natural measure on $\mathcal{S}^{d-1}$ and by $\mu^2$ the induced product measure $\mu \times \mu$ on $\mathcal{S}^{d-1} \times \mathcal{S}^{d-1}$. Given a (measurable) subset $A$ of $\mathcal{S}^{d-1}$ and $\theta$ between $0$ and $\pi$, we also denote
\begin{align*}
    \mu_{\theta}(A) &= \mu^{2}\left(\left\{(\vx, \vy) : \vx \in A, \vy \not \in A, \, d(\vx, \vy) \geq \theta\right\}\right).
\end{align*}
The following lemma states which set $A$ maximizes the measure of edges cut.

\begin{lemma}[{\cite[Corollary 6]{feige2002optimality}}]\label{lem:measure}
   Fix $\theta$ between $0$ and $\pi$. Then the maximum of $\mu_\theta(A)$, where $A$ ranges over all (measurable) subsets of $\mathcal{S}^{d-1}$, is attained for any cap of measure $1/2$ (namely, a hemisphere).
\end{lemma}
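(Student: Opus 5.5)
The plan is to follow the symmetrization argument of Feige and Schechtman: use two-point symmetrization on the sphere, together with a compactness argument, to show that some maximizer of $\mu_\theta$ is a cap. Then a one-dimensional calculation identifies the optimal cap as the hemisphere. Throughout, write $\mu_\theta(A)=\frac12\mu^2(\{(\vx,\vy): d(\vx,\vy)\ge\theta,\ \mathbf 1_A(\vx)\ne\mathbf 1_A(\vy)\})$. Since every pair at distance at least $\theta$ that is not cut has both endpoints in $A$ or both in $A^c$, we get
\[
\mu_\theta(A)=\tfrac12\Big(c_\theta-\mu^2\{\vx,\vy\in A: d\ge\theta\}-\mu^2\{\vx,\vy\notin A: d\ge\theta\}\Big),
\]
where $c_\theta$ is the total edge measure. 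Maximizing the cut is therefore the same as minimizing the \emph{far-pair} mass inside $A$ plus the far-pair mass inside $A^c$.

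\textbf{Step 1: two-point symmetrization.} For a hyperplane $H$ through the origin with reflection $\sigma$ and a chosen side $H^+$, define $A^*$ pointwise on each pair $\{\vx,\sigma\vx\}$: if exactly one of the two points lies in $A$, put it on the $H^+$ side; otherwise leave the pair unchanged. The key inequality is that for any two such pairs $\{\vx,\sigma\vx\}$ and $\{\vy,\sigma\vy\}$ with $\vx,\vy\in H^+$, the number of far pairs inside $A^*$ is at most the number inside $A$, and likewise for the complements. This uses $d(\vx,\vy)=d(\sigma\vx,\sigma\vy)\le d(\vx,\sigma\vy)=d(\sigma\vx,\vy)$ and a short case check on the configurations of the four points. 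Integrating over pairs gives $\mu_\theta(A^*)\ge\mu_\theta(A)$ and $\mu(A^*)=\mu(A)$.

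\textbf{Step 2: reduction to caps.} This is the main obstacle. I would work in the space of measurable sets modulo null sets with the $L^1$ metric, where $\mu_\theta$ is continuous: it is a bilinear form in $\mathbf 1_A$ with a bounded kernel. The standard Baernstein--Taylor argument then applies. Fix the measure $t=\mu(A)$ and the cap $C$ of measure $t$ centered at a pole $p$. Among the $L^1$-closure of the symmetrizations of a maximizer, pick the set closest to $C$. If it were not $C$, a symmetrization with respect to a hyperplane separating density points of $C\setminus A$ and $A\setminus C$, with $p$ on the $H^+$ side, would strictly decrease the distance to $C$, a contradiction. I would cite this step rather than redo it, since it is exactly Feige--Schechtman's Corollary~6 setup. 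Existence of a maximizer follows from weak-$*$ compactness of $\{f:0\le f\le1\}$ and bilinearity, with an extreme-point argument showing the optimum is attained at an indicator.

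\textbf{Step 3: optimizing over caps.} For a cap $C_t$ of measure $t$, the far-pair masses are $\phi(t)+\phi(1-t)$, where $\phi(t)=\mu^2\{\vx,\vy\in C_t:d\ge\theta\}$. Complementation symmetry gives invariance under $t\mapsto1-t$. To show the minimum is at $t=\tfrac12$, I would show directly that $\mu_\theta(C_t)$ is nondecreasing in $t$ on $[0,\tfrac12]$. Enlarging the cap moves a boundary layer $L$ from $A^c$ to $A$. A point $\vz$ on the boundary of $C_t$ has at least as much far-mass into $C_t^c$ as into $C_t$ whenever $t\le\tfrac12$: by reflecting across the hyperplane through $\vz$ and the center, the antipodal cap region around $-\vz$, which is far from $\vz$, lies more in the larger set $C_t^c$. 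Hence the derivative is nonnegative. Combining the three steps, a hemisphere maximizes $\mu_\theta(A)$.
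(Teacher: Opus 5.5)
The paper does not prove this lemma; it imports it from Feige--Schechtman. Your outline (two-point symmetrization to show caps are optimal at fixed measure, then a comparison among caps) is the standard symmetrization route to it, and Step~1 is correct as written. The weak points are in Step~2.
\begin{itemize}
\item \emph{Compactness.} Choosing ``the set closest to $C$'' in the $L^1$-closure of the symmetrization orbit needs that closure to be compact, not merely closed. This holds because two-point symmetrization is an $L^1$-contraction and preserves moduli of continuity, so approximating $\mathbf 1_A$ by a continuous function shows the orbit is totally bounded. Alternatively, work with closed sets in the Hausdorff metric.
\item \emph{Choice of hyperplane.} It should be the perpendicular bisector of density points $u\in C\setminus B$ and $v\in B\setminus C$, so that $\sigma u=v$. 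Then $\langle u-v,p\rangle>0$ puts $p$ on the side of $u$ automatically.
\item \emph{The maximizer remark.} Drop it: it is unnecessary and its justification is wrong. Running the argument from an arbitrary $A$ with $\mu(A)=t$ already puts $C$ in the closure of the orbit of $A$, so $\mu_\theta(A)\le\mu_\theta(C)$ by monotonicity and $L^1$-continuity. As for the justification, on $\{0\le f\le 1,\ \int f=t\}$ the relaxed objective is a constant plus $\iint f(x)f(y)\mathbf 1[d(x,y)<\theta]\,d\mu(x)\,d\mu(y)$. This is convex only if the cap kernel is positive semidefinite on mean-zero functions, which fails since its Funk--Hecke eigenvalues change sign. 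So no extreme-point principle is available.
\end{itemize}

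In Step~3 the claim is true, but the reflection must be pinned down. A hyperplane containing both $z$ and the cap's center $p$ fixes $z$, $C_t$ and $D_z=\{y: d(z,y)\ge\theta\}$, so it compares nothing. What works is the hyperplane through $0$ and $z$ with normal $p-\langle p,z\rangle z$ (the great sphere tangent to $\partial C_t$ at $z$). It fixes $D_z$ and, exactly because $t\le 1/2$, maps $C_t$ into $C_t^c$.

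A cleaner choice is the equatorial reflection $\sigma$ across $p^\perp$. For $\langle z,p\rangle\ge 0$ and $y\in C_t$ one has $\langle z,\sigma y\rangle=\langle z,y\rangle-2\langle z,p\rangle\langle y,p\rangle\le\langle z,y\rangle$. Hence $\mu(D_z\cap C_t)\le\mu(D_z\cap\sigma(C_t))$, and $\sigma(C_t)$ lies outside the hemisphere $H\supseteq C_t$ with the same center. This gives the comparison without derivatives. Set $L=H\setminus C_t$ and $E(X,Y)=\mu^2\{(x,y)\in X\times Y: d(x,y)\ge\theta\}$. Then $\mu_\theta(H)-\mu_\theta(C_t)=E(L,H^c)-E(L,C_t)\ge 0$, since every $z\in L$ satisfies $\langle z,p\rangle\ge 0$. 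Complementation handles $t>1/2$.
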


Therefore, we just need to estimate the measure of edges cut by a hyperplane. Since each hyperplane cuts the same measure of edges, for edges with endpoints at distance $\theta$ apart, a fraction of $\theta/\pi$ of them will be cut by any hyperplane. The following result can be extracted from~\cite[Section 3.1]{feige2002optimality}, but we give the full proof for completeness.

\begin{lemma}\label{lem:opt_c} Let $G_c = G_c(\theta, \eps)$ be defined as above. Denote by $\text{opt}_c$ the measure of edges cut by a random hyperplane, divided by the measure of edges in $G_c$. Then,
     \[\text{opt}_c \leq \theta/\pi + 2\eps.\] 
\end{lemma}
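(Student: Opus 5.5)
Fix a vertex $\vx$. By rotational symmetry, the fraction of edges cut by a random hyperplane equals the average, over neighbors $\vy$ of $\vx$ (that is, $\vy$ uniform in the cap $C$ of angular radius $\pi-\theta$ centered at $-\vx$), of the probability that a uniformly random hyperplane separates $\vx$ from $\vy$. By the Goemans--Williamson computation, this probability is $d(\vx,\vy)/\pi$. Hence
\[
\text{opt}_c = \frac{1}{\mu(C)}\int_{C} \frac{d(\vx,\vy)}{\pi}\, d\mu(\vy),
\]
and it suffices to show that this average angle is at most $\theta + 2\pi\eps$. We would then obtain an even slightly stronger bound, but $2\eps$ is all that is needed.

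Split $C$ into the inner cap $C'$ of radius $\pi-\theta-\eps$ around $-\vx$ and the annulus $C\setminus C'$.
\begin{itemize}
\item On the annulus, every $\vy$ satisfies $\theta \le d(\vx,\vy)\le \theta+\eps$, so the integrand is at most $(\theta+\eps)/\pi$.
\item On $C'$, we use the trivial bound $d(\vx,\vy)\le\pi$, so the integrand is at most $1$.
\end{itemize}
By the choice of $d$, we have $\mu(C')\le \eps\,\mu(C)$. Therefore
\[
\text{opt}_c \le \frac{\theta+\eps}{\pi} + \eps \le \frac{\theta}{\pi} + 2\eps,
\]
using $\eps/\pi\le\eps$.

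The only nontrivial ingredient is the choice of dimension, that is, the concentration of cap measure near the boundary. The paper takes this as part of the construction: $d$ is chosen so that the ratio $\mu(C')/\mu(C)$ is at most $\eps$. If a justification were needed, we would write the cap measure as proportional to $\int_0^{r}\sin^{d-2}t\,dt$. For $r\le\pi-\theta<\pi/2$ the integrand is increasing, so the ratio is at most
\[
\left(\frac{\sin(\pi-\theta-\eps)}{\sin(\pi-\theta)}\right)^{d-2}\cdot\frac{\pi-\theta}{\eps}.
\]
This decays geometrically in $d$ because the sine ratio is at most $1-c\,\eps(\theta-\pi/2)$, which gives the stated order for $d$. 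This estimate is the main (though routine) obstacle. The rest is the symmetry argument together with the hyperplane separation probability $d(\vx,\vy)/\pi$.
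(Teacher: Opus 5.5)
Your proof is correct and matches the paper's: you split the edges at a vertex (equivalently, all edges) into those at angle in $[\theta,\theta+\eps]$, which are cut with probability at most $(\theta+\eps)/\pi$, and the remainder, which carry at most an $\eps$-fraction of the measure by the choice of $d$. Your per-vertex rotational-symmetry reduction is only a rephrasing of the same argument.

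One small point concerns your optional justification of the choice of $d$, which the paper does not need. Monotonicity of $\sin^{d-2}t$ only bounds the denominator below by $\eps\sin^{d-2}(\pi-\theta-\eps)$, not by $\eps\sin^{d-2}(\pi-\theta)$, so your displayed bound does not follow as stated. If you split at $\pi-\theta-\eps/2$ instead, monotonicity gives
$\frac{2(\pi-\theta)}{\eps}\bigl(\frac{\sin(\pi-\theta-\eps)}{\sin(\pi-\theta-\eps/2)}\bigr)^{d-2}$,
which has the same geometric decay.
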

\begin{proof}
Let $E_1$ be the set of edges of $G_c$ whose endpoints have distance at most $\theta+\eps$, and let $E_2$ be the remaining edges. Since the edges in $E_1$ make angles in the range $[\theta, \theta+\eps]$, the probability that a random hyperplane cuts an edge in $E_1$ is at most $(\theta+\eps)/\pi$. By the choice of $d$, the total measure of $E_2$ is at most an $\eps$-fraction of the total measure of $E(G_c)$ and every edge in $E_{2}$ is cut with probability at most $1$. Hence,
\[
\operatorname{opt}_c
\leq
\frac{\mu^2(E_1)}{\mu^2(E(G_c))} \left(\frac{\theta + \eps}{\pi}\right)
+
\frac{\mu^2(E_2)}{\mu^2(E(G_c))}
\leq
\frac{\theta}{\pi} + 2\eps. \qedhere
\]
\end{proof}

\subsection{Discrete version}\label{sec:construction_discrete}

We choose a sufficiently small parameter $\gamma > 0$ (e.g., $\gamma < \eps/(d|\cot(\theta)|) \approx \eps/d$) such that the volume of a spherical cap of radius $\pi-\theta-\gamma$ is at least a $(1-\eps)$-fraction of the volume of a cap of radius $\pi - \theta + \gamma$. The discrete graph $G = G(\theta, \eps)$ is then constructed by partitioning $\mathcal{S}^{d-1}$ into $n = (O(1)/\gamma)^d$ equal-volume cells of diameter at most $\gamma$, following the approach in~\cite[Lemma~21]{feige2002optimality}. The vertex set of $G$ is formed by selecting an arbitrary point within each cell, and two vertices are connected by an edge if the distance between them is at least $\theta$.

\begin{lemma}\label{lem:chiv_bound}
    For every $\theta \in (\pi/2, \pi)$ and for every $\eps \in (0, 1)$, the graph $G = G(\theta, \eps)$ satisfies
    \[
    \chiv(G) \leq 1 - \frac{1}{\cos(\theta)}.
    \]
\end{lemma}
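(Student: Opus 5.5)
The plan is to exhibit an explicit vector coloring, namely the one given by the construction itself. Every vertex of $G$ is a point of $\mathcal{S}^{d-1}$, so we assign to each vertex $i$ the unit vector $\vx_i$ equal to the point chosen in its cell. By construction, $ij$ is an edge exactly when $d(\vx_i,\vx_j) = \arccos(\langle \vx_i,\vx_j\rangle) \geq \theta$. Since $\arccos$ is decreasing on $[-1,1]$, this gives $\langle \vx_i,\vx_j\rangle \leq \cos\theta$ for every edge $ij$.

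Next we pick $\kappa$ to match the definition of $\chiv$. We want $-(\kappa-1)^{-1} = \cos\theta$, that is, $\kappa = 1 - 1/\cos\theta$. Since $\theta \in (\pi/2,\pi)$, we have $\cos\theta \in (-1,0)$. Hence $-1/\cos\theta > 1$, so $\kappa > 2$ and $\kappa - 1 = -1/\cos\theta > 0$. The constraint $\kappa \geq 2$ in the definition is therefore satisfied. With this $\kappa$, every edge $ij$ satisfies $\langle \vx_i,\vx_j\rangle \leq \cos\theta = -(\kappa-1)^{-1}$. So $\kappa$ is feasible in the minimization defining $\chiv(G)$, and $\chiv(G) \leq \kappa = 1 - 1/\cos\theta$.

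There is no real obstacle here. The argument uses only the angular threshold in the edge rule, so it holds for any choice of points in the cells and does not depend on $\eps$ or $\gamma$. The one point needing care is the sign bookkeeping: because $\cos\theta<0$, we must check that $\kappa$ is at least $2$ and that inverting the inequality keeps the right direction. If $G$ has no edges, the bound still holds trivially, since then $\chiv(G)=2 \le \kappa$.
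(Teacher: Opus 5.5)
Your proposal is correct and follows the paper's proof: the identity embedding of the chosen points as unit vectors, with $\kappa = 1 - 1/\cos(\theta)$ so that $\cos(\theta) = -(\kappa-1)^{-1}$. You also check explicitly that $\kappa > 2$ and handle the edgeless case, which the paper leaves implicit.
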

\begin{proof}
Recall that $\chiv(G)$ is the minimum $\kappa \geq 2$ for which there exists an assignment of unit vectors $\vy_i$ to the vertices $i \in V(G)$ satisfying $\vy_i^\top \vy_j \leq -(\kappa - 1)^{-1}$ for all edges $ij \in E(G)$. By construction, each vertex $i \in V(G)$ corresponds to a point $\vx_i \in \mathcal{S}^{d-1}$, and every edge $ij \in E(G)$ satisfies $\vx_i^\top \vx_j \leq \cos(\theta)$. By considering the identity embedding $\vy_i = \vx_i$ for all $i \in V(G)$ and setting $\kappa = 1 - 1/\cos(\theta)$, we obtain $\cos(\theta) = -(\kappa - 1)^{-1}$. It follows immediately that $\chiv(G) \leq \kappa$.
\end{proof}

\begin{lemma}\label{lem:mc_bound}
    For every $\theta \in (\pi/2, \pi)$ and for every $\eps \in (0, 1)$, the graph $G = G(\theta, \eps)$ satisfies
    \[
    \mc(G) \leq \left(\frac{\theta} {\pi} + 5\eps\right) e(G).
    \]
\end{lemma}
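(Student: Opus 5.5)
The plan is to transfer any cut of the discrete graph $G$ to a measurable cut of the continuous graph $G_c$, and then compare edge counts with edge measures. For an arbitrary cut $(S, V\setminus S)$ of $G$, let $A\subseteq \mathcal{S}^{d-1}$ be the union of the cells whose chosen representative lies in $S$. By Lemma~\ref{lem:measure}, $\mu_\theta(A)$ is at most the measure cut by a hemisphere. By Lemma~\ref{lem:opt_c}, that measure is at most $(\theta/\pi+2\eps)\,\mu^2(E(G_c))$. The work is therefore to relate $|E_G(S,V\setminus S)|/n^2$ to $\mu_\theta(A)$, and $e(G)/n^2$ to $\mu^2(E(G_c))$, with only $O(\eps)$ relative loss.

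First, lower-bound $e(G)$ in terms of $\mu^2(E(G_c))$. Since cells have diameter at most $\gamma$, the following holds for representatives $\vx_i\in C_i$ and $\vx_j\in C_j$ with $d(\vx_i,\vx_j)\ge\theta$: every pair $(\vx,\vy)\in C_i\times C_j$ has $d(\vx,\vy)\ge\theta-2\gamma$. Conversely, if some pair in $C_i\times C_j$ has distance at least $\theta+2\gamma$, then $ij$ is an edge. Each cell has measure $1/n$, so the edges and cut edges of $G$, divided by $n^2$, are sandwiched between the corresponding $\mu^2$-measures in the graphs with thresholds $\theta\pm 2\gamma$. Concretely:
\[
\frac{|E_G(S,\bar S)|}{n^2}\le \mu_{\theta-2\gamma}(A),\qquad \frac{2e(G)}{n^2}\ge \mu^2\bigl(\{d\ge\theta+2\gamma\}\bigr).
\]
Here the factor $2$ accounts for ordered pairs, and I keep the normalization consistent on both sides.

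Next, control the boundary shells. The measure of pairs at distance in $[\theta-2\gamma,\theta+2\gamma]$ equals the cap measure of radius $\pi-\theta+2\gamma$ minus that of radius $\pi-\theta-2\gamma$. The choice of $\gamma$ (used with $2\gamma$ in place of $\gamma$, which only requires shrinking $\gamma$ by a constant) makes this shell at most an $\eps$-fraction of $\mu^2(E(G_c))$, up to a factor $1+O(\eps)$. This gives two bounds:
\[
\mu_{\theta-2\gamma}(A)\le \mu_\theta(A)+\eps\,\mu^2(E(G_c)),\qquad \mu^2(\{d\ge\theta+2\gamma\})\ge(1-\eps)\mu^2(E(G_c)).
\]
Combining the pieces:
\[
\frac{|E_G(S,\bar S)|}{e(G)}\le \frac{(\theta/\pi+2\eps+\eps)}{1-\eps}\le \frac{\theta}{\pi}+5\eps,
\]
for $\eps$ small. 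Here $(\theta/\pi+3\eps)/(1-\eps)\le \theta/\pi+3\eps+2\eps$ needs $\theta/\pi+3\eps\le 2(1-\eps)$, which is fine. For $\eps$ not small the bound is trivial once $\theta/\pi+5\eps\ge1$. Taking the maximum over $S$ gives the lemma.

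The main obstacle is the bookkeeping around the diameter-$\gamma$ discretisation. Two points need care. The first is that the shell of width $4\gamma$, rather than $2\gamma$, is still an $O(\eps)$ fraction; I would handle this by invoking the stated choice of $\gamma$ with a constant rescaling. The second is that the diagonal pairs $i=j$, and pairs within the same cell, contribute nothing, since $\theta>\pi/2>\gamma$. Everything else is the direct chain Lemma~\ref{lem:measure} $\Rightarrow$ Lemma~\ref{lem:opt_c}.
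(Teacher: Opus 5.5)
Your proposal is correct and follows essentially the paper's argument. Your sandwich between the thresholds $\theta\pm 2\gamma$ plays the role of the paper's split of cell pairs into near, contributing and mixed pairs, with the discretisation error confined to a thin shell around distance $\theta$ whose measure is controlled by the choice of $\gamma$, and you then apply Lemma~\ref{lem:measure} and Lemma~\ref{lem:opt_c} as the paper does. Your bookkeeping is in fact slightly more careful than the paper's on two points, the shell width $4\gamma$ rather than $2\gamma$ and ordered versus unordered pairs. With consistent normalization, the final step $(\theta/\pi+3\eps)/(1-\eps)\le\theta/\pi+5\eps$, or the trivial case otherwise, goes through as you state.
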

\begin{proof}
    Let $C \subseteq V$ be a set such that $e(C, V \setminus C) = \mc(G)$. Color each vertex of $C$ in red and each vertex in $V \setminus C$ in blue and color each of the $n$ cells of the sphere $\mathcal{S}^{d-1}$ with the same color as the vertex on it (like a disco ball). Let $R$ be the set of red points in $\mathcal{S}^{d-1}$.

Denote by $\mu_\theta$ the measure of all edges in $G_c$ and by $\mu_\theta(A, B)$ the measure of edges between cells $A$ and $B$. We give an upper bound on $\mc(G)/e(G)$ from the the ratio $\mu_\theta(R)/\mu_\theta$.

Let $\mathcal{P}$ be the set of all pairs of cells $(A, B)$. We classify the pairs in $\mathcal{P}$ based on the distances between their points. Let $\mathcal{P}_1$ be the set of \textit{near pairs}, where $d(x, y) < \theta$ for all $x \in A$ and $y \in B$. Let $\mathcal{P}_2$ be the set of \textit{contributing pairs}, where $d(x, y) \geq \theta$ for all $x \in A$ and $y \in B$. Because every pair of points between these cells forms an edge, and each cell has a volume of $1/n$, every pair in $\mathcal{P}_2$ contributes exactly $1/n^2$ to the measure of edges. Finally, let $\mathcal{P}_3$ be the set of \textit{mixed pairs}, which contain some points at distance less than $\theta$ and others at distance at least $\theta$.

A cell participates in mixed pairs only with cells entirely contained in the difference of two caps, one of radius $\pi-\theta+\gamma$ and the other of radius $\pi-\theta - \gamma$. As the volume of this region is an $\eps$-fraction of the volume of a cap of radius $\pi-\theta$, the influence of mixed pairs on the total measure of edges is bounded by
\[
\sum_{(A, B) \in \mathcal{P}_3} \mu_\theta(A, B) \leq \frac{|\mathcal{P}_3|}{n^2} \leq \eps \mu_\theta.
\]

Now we are ready to bound $\mu_\theta$ and $\mu_\theta(R)$. Since the near pairs do not contribute to the measure of edges, we can upper bound $\mu_\theta$ by
\[
\mu_\theta = \sum_{(A, B) \in \mathcal{P}_2} \mu_\theta(A, B) + \sum_{(A, B) \in \mathcal{P}_3} \mu_\theta(A, B) \leq \sum_{(A, B) \in \mathcal{P}_2} \frac{1}{n^2} + \eps \mu_\theta \leq \frac{e(G)}{n^2} + \eps \mu_\theta.
\]
Let $\mathcal{P}'_i \subseteq \mathcal{P}_i$ denote the set of pairs $(A, B) \in \mathcal{P}_i$ such that $A$ and $B$ have different colors. Even if every pair in $\mathcal{P}_3'$ contributes to the cut while having zero measure, we can lower bound $\mu_\theta(R)$ by
\[
\mu_\theta(R) = \sum_{(A, B) \in \mathcal{P}_2'} \mu_\theta(A, B) + \sum_{(A, B) \in \mathcal{P}_3'} \mu_\theta(A, B) \geq \frac{|\mathcal{P}_2'|}{n^2} \geq \frac{\mc(G) - |\mathcal{P}_3'|}{n^2} \geq \frac{\mc(G)}{n^2} - \eps\mu_\theta.
\]
We conclude the result by applying Lemma~\ref{lem:opt_c}:
\[
\frac{\mc(G)}{e(G)} \leq \frac{\mu_\theta(R) + \eps\mu_\theta}{\mu_\theta - \eps\mu_\theta} \leq \frac{\mu_\theta(R)}{\mu_\theta} + 3\eps \leq \text{opt}_c + 3\eps \leq \frac{\theta}{\pi} + 5\eps. \qedhere
\]
\end{proof}

\section{Proof of Theorem~\ref{thm:main}}

Before proving Theorem~\ref{thm:main}, we need the following trigonometric estimate.

\begin{lemma}\label{lem:numerical}
   For every $\delta \in (0,1)$, there exist constants $\theta \in (\pi/2, \pi)$ and $\eps \in (0, 1)$ such that
    \[
    \frac{-\arcsin (\cos(\theta))}{\pi} + \eps \leq \frac{-\cos(\theta)}{\pi-\delta}.
    \]
\end{lemma}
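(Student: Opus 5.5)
Substitute $t = -\cos(\theta)$. As $\theta$ ranges over $(\pi/2,\pi)$, $t$ ranges over $(0,1)$, and $-\arcsin(\cos\theta) = \arcsin(t)$. The claimed inequality then becomes
\[
\frac{\arcsin(t)}{\pi} + \eps \leq \frac{t}{\pi-\delta},
\qquad\text{equivalently}\qquad
\eps \leq t\left(\frac{1}{\pi-\delta} - \frac{1}{\pi}\cdot\frac{\arcsin(t)}{t}\right).
\]
The plan is to take $t$ small, so that $\theta$ is slightly larger than $\pi/2$. In that regime $\arcsin(t)/t \to 1$, while $\frac{1}{\pi-\delta} > \frac{1}{\pi}$ strictly because $\delta>0$. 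So the bracket becomes a positive constant, and then we choose $\eps$ below $t$ times that constant.

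The key steps are as follows.

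\emph{Gap constant.} Set $c = \frac{1}{\pi-\delta} - \frac{1}{\pi} = \frac{\delta}{\pi(\pi-\delta)} > 0$.

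\emph{Bound on $\arcsin(t)/t$.} Since $\arcsin$ is convex on $[0,1]$ with $\arcsin(0)=0$, the ratio $\arcsin(t)/t$ is increasing and tends to $1$ as $t\to 0^+$. A concrete bound makes this explicit: for $t\in(0,1/2]$ we have $\arcsin(t) \le t/\sqrt{1-t^2}$, because the derivative $1/\sqrt{1-s^2}$ is at most $1/\sqrt{1-t^2}$ on $[0,t]$. Hence $\arcsin(t)/t \le 1+t^2$ for small $t$, since $1/\sqrt{1-t^2}\le 1+t^2$ when $t^2\le 1/2$.

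\emph{Choice of $t$ and $\theta$.} Pick $t\in(0,1/2]$ with $t^2 \le \pi c/2$. Then
\[
\frac{1}{\pi-\delta} - \frac{\arcsin(t)}{\pi t} \;\ge\; c - \frac{t^2}{\pi} \;\ge\; \frac{c}{2}.
\]
Set $\theta = \arccos(-t) \in (\pi/2,\pi)$.

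\emph{Choice of $\eps$.} Finally take $\eps = tc/2 \in (0,1)$. This gives the inequality of Lemma~\ref{lem:numerical}.

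The only real point needing care is the uniform estimate $\arcsin(t)\le t(1+t^2)$ in the second step. Everything else is rearrangement. A cleaner alternative is to avoid explicit constants and argue by continuity: the function $f(t) = \frac{t}{\pi-\delta} - \frac{\arcsin t}{\pi}$ satisfies $f(0)=0$ and $f'(0) = c>0$, so $f(t)>0$ for some small $t>0$, and we take $\eps=\min\{f(t),1/2\}$. I would present the continuity argument and mention the explicit bound as a check.
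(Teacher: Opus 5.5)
Your proof is correct and follows essentially the same route as the paper: take $t=-\cos\theta$ small, use the cubic bound $\arcsin t \le t+t^3$, and choose $\eps$ of order $t\delta$ (the paper fixes $t=\delta/\pi$ and $\eps=\delta t/(2\pi^2)$ explicitly). Your derivation of the cubic bound via $\arcsin t\le t/\sqrt{1-t^2}\le t(1+t^2)$ is a slightly more careful justification than the paper's appeal to the Taylor series, and your continuity argument via $f'(0)=c>0$ is an equally valid shortcut.
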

\begin{proof}
    Let $\theta$ be such that $-\cos(\theta) = x = \delta/\pi$ and let $\eps = \delta x / (2\pi^2)$.
    From the Taylor expansion of $\arcsin$, since $x > 0$, it follows that:
    \[
    x \leq \arcsin(x) = x + \frac{x^3}{6} + \dots \leq x + x^3.
    \]
    The result then follows by a straightforward computation. By substituting this bound and dropping the strictly negative terms, we have:
    \[
    \left(1 - \frac{\delta}{\pi}\right)\arcsin (x) + (\pi-\delta)\eps \leq x + x^3 - \frac{\delta}{2\pi} x = x + \frac{\delta^3}{\pi^{3}} - \frac{\delta^2}{2\pi^2} \leq x.
    \]
    Dividing this inequality by $\pi-\delta$ and recalling that $\arcsin(x) = -\arcsin(\cos(\theta))$ gives the result.
\end{proof}

Now, we are ready to show our main result.

\begin{proof}[Proof of Theorem~\ref{thm:main}]Let $\theta$ and $\eps$ be chosen as in Lemma~\ref{lem:numerical}, and let $G = G(\theta, \eps/5)$. Applying Lemma~\ref{lem:mc_bound} followed by the identity $\arccos(x) + \arcsin(x) = \pi/2$, we obtain the initial equality below. The subsequent bounds follow directly from Lemma~\ref{lem:numerical} and Lemma~\ref{lem:chiv_bound}, respectively:\[\frac{\mc(G)}{m} - \frac{1}{2} \leq \frac{\theta}{\pi} - \frac{1}{2} + \eps = \frac{-\arcsin (\cos (\theta))}{\pi} + \eps \leq \frac{-\cos (\theta)}{\pi-\delta}  \leq \frac{1}{(\pi-\delta)(\chiv(G) - 1)}. \qedhere
\]
\end{proof}

\section{Concluding Remarks}

In this note we show that for every $\delta \in (0, 1)$, there exists a graph $G$ with $m$ edges for which
\[
\surp(G) \leq \frac{m}{(\pi-\delta)(\chiv(G) - 1)}.
\]

The Lovász theta function of the complement graph $\vartheta(\overline{G})$, also known as the strict vector chromatic number of $G$, is the minimum real number $\kappa \geq 2$ for which there exists a unit vector $\vx_i$ (in some Euclidean space) for each vertex $i \in V$ such that $\vx_i^\top \vx_j = -(\kappa - 1)^{-1}$ holds whenever $i$ and $j$ are distinct vertices in $G$ and $ij \in E$. From this definition, it follows that $\vartheta(\overline{G}) \geq \chiv(G)$. By combining this inequality with Theorem~\ref{thm:BJS}, Balla, Janzer, and Sudakov deduced the following theorem:

\begin{theorem}[{\cite[Theorem 1.5]{balla2024maxcut}}]\label{thm:BJS_theta}
    Let $G$ be a graph with $m$ edges and let $\vartheta = \vartheta(\overline{G})$ be the Lovász theta function of the complement graph $\overline{G}$. Then,
    \[
    \surp(G) \geq \frac{m}{\pi (\vartheta - 1)}.
    \]
\end{theorem}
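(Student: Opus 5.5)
The statement to prove is Theorem~\ref{thm:BJS_theta}, and I would obtain it as a one-line corollary of Theorem~\ref{thm:BJS} using the comparison $\vartheta(\overline{G}) \geq \chiv(G)$. The plan therefore has two steps: first justify this comparison carefully, then substitute it into the bound of Theorem~\ref{thm:BJS}. The substitution works because the right-hand side $m/(\pi(\kappa-1))$ is decreasing in $\kappa$ on $\kappa>1$.

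\textbf{Step 1 (comparison).} Let $\kappa = \vartheta(\overline{G})$. By the definition recalled above, there are unit vectors $\vx_i$ with $\vx_i^\top \vx_j = -(\kappa-1)^{-1}$ for every edge $ij \in E$, and $\kappa \geq 2$. These vectors satisfy the inequality $\vx_i^\top \vx_j \leq -(\kappa-1)^{-1}$ on every edge, so they form a feasible vector coloring with parameter $\kappa$. Since $\chiv(G)$ is the minimum such parameter, $\chiv(G) \leq \kappa$.

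I expect this to be the only step needing any care. One has to check that the feasible sets are nested in the right direction: the equality constraints for $\vartheta$ imply the inequality constraints for $\chiv$. One must also check that the minimum (or infimum) in the definitions is well behaved. Both are immediate from the definitions.

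If $G$ has no edges, then $m=0$ and the statement is trivial. Edge cases where $\vartheta$ is attained only as an infimum are handled by a limiting argument with the same monotonicity.

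\textbf{Step 2 (substitution).} Since $\chiv(G) - 1 \leq \vartheta - 1$ and both quantities are at least $1 > 0$, we have $\frac{1}{\chiv(G)-1} \geq \frac{1}{\vartheta-1}$. Theorem~\ref{thm:BJS} then gives
\[
\surp(G) \geq \frac{m}{\pi(\chiv(G)-1)} \geq \frac{m}{\pi(\vartheta-1)},
\]
which is exactly the claimed bound.
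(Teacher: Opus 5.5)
Your proposal is correct and follows the paper's route exactly. The paper likewise obtains Theorem~\ref{thm:BJS_theta} by observing that the equality constraints defining $\vartheta(\overline{G})$ imply the inequality constraints defining $\chiv(G)$, so $\vartheta(\overline{G}) \geq \chiv(G)$, and then substituting this into Theorem~\ref{thm:BJS}, using that $m/(\pi(\kappa-1))$ is decreasing in $\kappa$.
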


A natural open question is whether the graph constructed in Section~\ref{sec:construction_discrete} also shows that the constant $\pi$ in Theorem~\ref{thm:BJS_theta} is tight. This would follow immediately if $\vartheta(\overline{G})$ and $\chiv(G)$ were arbitrarily close for $G(\theta, \eps)$. We expect this to be the case, as graphs in which these two parameters differ are rare in the literature~\cite{schrijver1979comparison, balla2024maxcut}.

\section*{Acknowledgements}
I am grateful to my advisor, Gabriel Coutinho, for his insightful feedback and mentorship. I would also like to thank Igor Balla for the fun and engaging discussions. This project was supported by FAPEMIG and CNPq.


\bibliographystyle{abbrv}
\bibliography{references.bib}

\begin{thebibliography}{10}

\bibitem{ABKS}
N.~Alon, B.~Bollob{\'a}s, M.~Krivelevich, and B.~Sudakov.
\newblock Maximum cuts and judicious partitions in graphs without short cycles.
\newblock {\em Journal of Combinatorial Theory, Series B}, 88(2):329--346, 2003.

\bibitem{balla2024maxcut}
I.~Balla, O.~Janzer, and B.~Sudakov.
\newblock On maxcut and the lov{\'a}sz theta function.
\newblock {\em Proceedings of the American Mathematical Society}, 152(05):1871--1879, 2024.

\bibitem{Erd79}
P.~Erd{\H{o}}s.
\newblock Problems and results in graph theory and combinatorial analysis.
\newblock In {\em Graph Theory and Related Topics (Proc. Conf., Univ. Waterloo, Waterloo, 1977)}, pages 153--163. Academic Press, 1979.

\bibitem{feige2002optimality}
U.~Feige and G.~Schechtman.
\newblock On the optimality of the random hyperplane rounding technique for max cut.
\newblock {\em Random Structures \& Algorithms}, 20(3):403--440, 2002.

\bibitem{goemans1995improved}
M.~X. Goemans and D.~P. Williamson.
\newblock Improved approximation algorithms for maximum cut and satisfiability problems using semidefinite programming.
\newblock {\em Journal of the ACM (JACM)}, 42(6):1115--1145, 1995.

\bibitem{jin2025smalleigenvalueslargecuts}
Z.~Jin, A.~Milojevi{\'c}, I.~Tomon, and S.~Zhang.
\newblock From small eigenvalues to large cuts, and chowla's cosine problem, 2025.
\newblock arXiv:2509.03490.

\bibitem{karger1998approximate}
D.~Karger, R.~Motwani, and M.~Sudan.
\newblock Approximate graph coloring by semidefinite programming.
\newblock {\em Journal of the ACM (JACM)}, 45(2):246--265, 1998.

\bibitem{karp2009reducibility}
R.~M. Karp.
\newblock Reducibility among combinatorial problems.
\newblock In {\em 50 Years of Integer Programming 1958-2008: from the Early Years to the State-of-the-Art}, pages 219--241. Springer, 2009.

\bibitem{mceliece1978lovasz}
R.~J. McEliece, E.~R. Rodemich, and H.~C. Rumsey~Jr.
\newblock The lov{\'a}sz bound and some generalizations.
\newblock {\em J. Combin. Inform. System Sci}, 3(3):134--152, 1978.

\bibitem{schrijver1979comparison}
A.~Schrijver.
\newblock A comparison of the delsarte and lov{\'a}sz bounds.
\newblock {\em IEEE Transactions on Information Theory}, 25(4):425--429, 1979.

\end{thebibliography}

\end{document}